\newtheorem{theorem}{Theorem}[section]
\newtheorem{lemma}[theorem]{Lemma}
\theoremstyle{definition}
\newtheorem{definition}[theorem]{Definition}
\newtheorem{example}[theorem]{Example}
\theoremstyle{remark}
\newtheorem{remark}[theorem]{Remark}
\numberwithin{equation}{section}
\begin{document}

\setcounter{page}{1}

\title[MPP on QPMs]{startpoints via weak contractions}

\author[Collins Amburo Agyingi]{Collins Amburo Agyingi$^{1,3}$}

\author[Ya\'e Ulrich Gaba]{Ya\'e Ulrich Gaba$^{1,2,3,*}$}

\address{$^{1}$ Department of Mathematical Sciences, North West University, Private Bag
	X2046, Mmabatho 2735, South Africa.}

\address{$^{2}$ Institut de Math\'ematiques et de Sciences Physiques (IMSP), 01 BP 613 Porto-Novo, B\'enin.}

\address{$^{3}$ African Center for Advanced Studies (ACAS),
	P.O. Box 4477, Yaounde, Cameroon.}

\bigskip

 \email{\textcolor[rgb]{0.00,0.00,0.84}{ collins.agyingi@nwu.ac.za
}}

\email{\textcolor[rgb]{0.00,0.00,0.84}{yaeulrich.gaba@gmail.com
}}

\subjclass[2010]{Primary 47H05; Secondary 47H09, 47H10.}

\keywords{quasi-pseudometric; startpoint; endpoint; fixed point. }

\date{Received: xxxxxx; Accepted: zzzzzz.
\newline \indent $^{*}$Corresponding author}

\begin{abstract}
Startpoints (resp. endpoints) can be defined as ``oriented fixed points". They arise naturally in the study of fixed for multi-valued maps defined on quasi-metric spaces. 
In this article, we give a new result in the startpoint theory for quasi-pseudometric spaces. The result we present is obtained via a generalized weakly contractive set-valued map.
\end{abstract} 

\maketitle

\section{Introduction and preliminaries}

Recently, the notion of a startpoint has been introduced
and studied as a generalization of that of fixed point of set-valued mappings. This notion has
been defined by Gaba\cite{rico} as follows:

\begin{definition}\label{definition1}(Compare \cite{rico})
	Let $(X,d)$ be a $T_0$-quasi-metric space.
Let $F:X\to 2^X$ be a set-valued map. An element $x\in X$ is said to be 
	\begin{enumerate}
		\item[(i)] a startpoint of $F$ if $H(\{x\},Fx)=0$,
		\item[(ii)] an endpoint of $F$ if $H(Fx,\{x\})=0$.
		
	\end{enumerate}
\end{definition}

In Definition \ref{definition1}, $H$ refers to the Hausdorff quasi-pseudometric which we define below. 

The theory of startpoint came to extend the idea of fixed points for multi-valued mappings defined on quasi-pseudometric spaces. A more detailed introduction to the subject can be read in \cite{rico, rico1, ricoo}. As mentioned in Definition \ref{definition1}, the appropriate framework for the theory of startpoint is the quasi-metric setting. For the convenience of the reader, we recall the following well known definitions and facts about quasi-metric spaces as well as some additional definitions related to set-valued maps on these spaces.

\begin{definition}(See \cite{rico})
Let $X$ be a non empty set. A function $d:X \times X \to [0,\infty)$ is called a \textbf{quasi-pseudometric} on $X$ if:
\begin{enumerate}
\item[i)] $d(x,x)=0 \quad \forall \ x \in X$, 
\item[ii)] $d(x,z) \leq d(x,y) + d(y,z) \quad \forall\  x,y,z \in X $. 
\end{enumerate}

Moreover, if

\begin{enumerate}
\item[iii)]  $d(x,y)=0=d(y,x) \Longrightarrow x=y$, then $d$ is said to be a \textbf{$T_0$-quasi-metric}.
\end{enumerate}
 The latter condition is referred to as the $T_0$-condition.
\end{definition}

\begin{remark} \hspace*{0.5cm}  
\begin{itemize}
\item Let $d$ be a quasi-pseudometric on $X$, then the function $d^{-1}$ defined by $d^{-1}(x,y)=d(y,x)$ whenever $x,y \in X$ is also a quasi-pseudometric on $X$, called the \textbf{conjugate} of $d$. 
\item It is easy to verify that the function $d^s$ defined by $d^s:=d\vee d^{-1}$, i.e. $d^s(x,y)=\max \{d(x,y),d(y,x)\}$ defines a metric on $X$ whenever $d$ is a $T_0$-quasi-metric on $X$.
\end{itemize}
\end{remark}

Let $(X,d)$ be a quasi-pseudometric space. For $x \in X$ and $\varepsilon > 0$, $$B_{d}(x,\varepsilon)=\lbrace y \in X: d(x,y) < \varepsilon \rbrace $$ denotes the open $\varepsilon$-ball at $x$. The collection of all such balls yields a base for the topology $\tau (d)$ induced by $d$ on $X$.

\begin{definition} (See \cite{rico}) Let $(X,d)$ be a quasi-pseudometric space. The sequence $(x_n)$ \textbf{$d$-convergences} to $x$, or \textbf{left-convergence} to $x$, and we denote by $x_n \overset{d}{\longrightarrow} x$, if $ d(x_n,x) \longrightarrow 0$.
	\end{definition}

	\begin{definition}
		In a quasi-pseudometric space $(X,d)$, we shall say that a sequence $(x_n)$ \textbf{$d^s$-converges} to $x$ 
		 and we denote it as $x_n \overset{d^{s}}{\longrightarrow} x$ or $x_n \longrightarrow x$ when there is no confusion,
	if
	\[
	  x_n \overset{d}{\longrightarrow} x \ \text{ and }\ x_n \overset{d^{-1}}{\longrightarrow} x.
	\]
	
\end{definition}

\begin{definition}(See \cite{rico})
	A sequence $(x_n)$ in a quasi-pseudometric $(X,d)$ is called
 \textbf{left $K$-Cauchy} if for every $\epsilon >0$, there exists $n_0 \in \mathbb{N}$ such that 
		$$ \forall \  n,k: n_0\leq k \leq n \quad d(x_k,x_n )< \epsilon  ;$$
		
\end{definition}

\begin{definition}(See \cite{rico})
	A quasi-pseudometric space $(X,d)$ is called
	
	\begin{itemize}
		\item \textbf{left-$K$-complete} provided that any left $K$-Cauchy sequence is $d$-convergent,
		\item {\bf Smyth complete} if any left $K$-Cauchy sequence is $d^s$-convergent.
	\end{itemize} 
\end{definition}

We know that every Smyth-complete quasi-metric space is left $K$-complete. It is known that the converse implication does not hold.

\begin{definition}\cite{rico}
 A $T_0$-quasi-metric space $(X,d)$ is called \textbf{bicomplete} provided that the metric $d^s$ on $X$ is complete.
\end{definition}

\begin{definition}\cite{rico}
	Let  $(X,d)$ be a quasi-pseudometric space and $A\subseteq X$. Then $A$ is said to be {\it bounded} provided that there exists a positive real constant $M$ such that $d(x,y)<M$ whenever $x,y\in A$.  
 
\end{definition}

Let $(X,d)$ be a quasi-pseudometric space. We set $\mathscr{P}_0(X):=2^X \setminus \{ \emptyset\}$ where $2^X$ denotes the power set of $X$. For $x\in X$ and $A \in \mathscr{P}_0(X)$, we set:

$$ d(x,A):= \inf\{ d(x,a),a\in A\} , \quad  d(A,x):= \inf\{ d(a,x),a\in A\}.$$

We also define the map $H:\mathscr{P}_0(X) \times \mathscr{P}_0(X) \to [0,\infty]$ by 

$$H(A,B)= \max \left\lbrace \underset{a\in A}{\sup}\ d(a,B), \ \underset{b\in B}{\sup} \ d(A,b)   \right\rbrace \text{ whenever } A,B, \in \mathscr{P}_0(X).$$

Then $H$ is an extended\footnote{This means that $H$ can attain the value $\infty$ as it appears in the definition.} quasi-pseudometric on $\mathscr{P}_0(X)$.

 We shall denote by $CB(X)$ the collection of all nonempty bounded and closed subsets of $(X,d^s)$. We denote by $K(X)$ the family of nonempty compact subsets of $(X, d ^s )$\footnote{They will be called join-compact.}
and by $C(X)$ the family of nonempty closed subsets of $(X,d)$.

We complete this section by recalling the following lemma.

\begin{lemma} (See \cite[Lemma 8]{rico})
	Let $(X,d)$ be a quasi-pseudometric space. For every fixed $x\in X,$ the mapping $y\mapsto d(x,y)$ is $\tau(d)$ upper semicontinuous($\tau(d)$-usc in short) and $\tau(d^{-1})$ lower semicontinuous( $\tau(d^{-1})$-lsc in short). For every fixed $y\in X,$ the mapping $x\mapsto d(x,y)$ is $\tau(d)$-lsc and $\tau(d^{-1})$-usc.
	
\end{lemma}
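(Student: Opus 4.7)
The plan is to work directly from the open-set characterization of semicontinuity: a real-valued function $f$ is $\tau(d)$-upper semicontinuous iff the sublevel sets $\{f<\alpha\}$ are $\tau(d)$-open, and $\tau(d)$-lower semicontinuous iff the superlevel sets $\{f>\alpha\}$ are $\tau(d)$-open (and analogously for $\tau(d^{-1})$). Each of the four assertions in the lemma will then reduce to exhibiting the right sub/superlevel set as a union of basic $\varepsilon$-balls and applying the triangle inequality.

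For the first pair of assertions, fix $x\in X$ and set $f(y):=d(x,y)$. The observation that $\{y:f(y)<\alpha\}=B_d(x,\alpha)$ makes $\tau(d)$-upper semicontinuity immediate from the definition of $\tau(d)$. For the $\tau(d^{-1})$-lower semicontinuity of the same $f$, I would take any $y_0$ with $d(x,y_0)>\alpha$, set $\delta:=d(x,y_0)-\alpha>0$, and show that the $\tau(d^{-1})$-ball $B_{d^{-1}}(y_0,\delta)=\{y:d(y,y_0)<\delta\}$ lies inside $\{f>\alpha\}$: for $y$ in this ball the triangle inequality written as $d(x,y_0)\leq d(x,y)+d(y,y_0)$ yields $d(x,y)>\alpha$.

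For the second pair (fix $y$, set $g(x):=d(x,y)$) the scheme is identical, but the triangle inequality must be applied with the variable point in the first slot. For $\tau(d)$-lower semicontinuity I would pick $x_0$ with $d(x_0,y)>\alpha$ and show $B_d(x_0,d(x_0,y)-\alpha)\subseteq\{g>\alpha\}$ via $d(x_0,y)\leq d(x_0,x)+d(x,y)$; for $\tau(d^{-1})$-upper semicontinuity I would pick $x_0$ with $d(x_0,y)<\alpha$ and show $B_{d^{-1}}(x_0,\alpha-d(x_0,y))\subseteq\{g<\alpha\}$ via $d(x,y)\leq d(x,x_0)+d(x_0,y)$. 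The reason $f$ and $g$ swap their roles with respect to upper and lower semicontinuity is exactly that the variable sits in the opposite slot of $d$.

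I do not anticipate any genuine obstacle: the lemma is an $\varepsilon$-ball bookkeeping exercise with four triangle-inequality applications. The only pitfall worth flagging is the author's convention that \emph{$d$-convergence} means $d(x_n,x)\to 0$, which does not coincide with topological convergence in $\tau(d)$ as generated by the balls $B_d(x,\varepsilon)=\{y:d(x,y)<\varepsilon\}$; I would therefore phrase the entire argument in terms of open sets and ball neighborhoods rather than sequences, to avoid any translation between the two notions.
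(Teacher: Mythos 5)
Your proof is correct: all four sublevel/superlevel-set arguments check out, and each triangle-inequality application is applied in the right slot. The paper itself offers no proof of this lemma (it is imported by citation from Lemma 8 of \cite{rico}), so there is nothing to diverge from; your open-set formulation is the standard argument, and your caution about the paper's convention that $d$-convergence means $d(x_n,x)\to 0$ (which is $\tau(d^{-1})$-convergence, not $\tau(d)$-convergence, under the stated ball definition) is well placed.
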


\section{The result}

In this section, we give a new startpoint theorem for a generalized weakly contractive set-valued map.

We begin with the following intermediate result.

\begin{lemma} \label{lemma1}
	Let $(X,d)$ be $T_0$-quasi-metric space and $A \subset X$. If $A$ is a compact subset of $(X, d^s )$, then it is
	a closed subset of $(X, d)$. That is, $K(X) \subset C(X)$.
\end{lemma}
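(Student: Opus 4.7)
The plan is to verify that $A$ is sequentially closed in $(X,d)$. To this end, I would pick an arbitrary sequence $(x_n)\subset A$ with $x_n \overset{d}{\longrightarrow} x$ and aim to show $x \in A$.

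The first step is to leverage the compactness hypothesis. Since $(X,d^s)$ is a genuine metric space, compactness coincides with sequential compactness there, so $(x_n)$ admits a subsequence $(x_{n_k})$ with $d^s(x_{n_k},a)\to 0$ for some $a\in A$. Unpacking $d^s = d\vee d^{-1}$, this yields both
\[
d(x_{n_k},a)\longrightarrow 0\qquad\text{and}\qquad d(a,x_{n_k})\longrightarrow 0.
\]

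The heart of the proof is then to identify $x$ with $a$ and invoke membership in $A$. For one half of the $T_0$-axiom, the triangle inequality combined with $d(x_{n_k},x)\to 0$ gives
\[
d(a,x)\le d(a,x_{n_k})+d(x_{n_k},x)\longrightarrow 0,
\]
hence $d(a,x)=0$. The companion estimate $d(x,a)=0$ is then obtained by an analogous chaining through the subsequence, exploiting the other half of the $d^s$-convergence of $(x_{n_k})$ together with the triangle inequality. The $T_0$-condition finally forces $x=a\in A$, so $A$ is closed in $(X,d)$.

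The point that will require most care is securing \emph{both} equalities $d(x,a)=0$ and $d(a,x)=0$: because $d$ is asymmetric, the convergence datum $d(x_n,x)\to 0$ does not come equipped with a free companion bound on $d(x,x_n)$. This is precisely why the $d^s$-compactness (which provides convergence in \emph{both} $d$ and $d^{-1}$) together with the $T_0$-axiom are both essential to the argument; without the joint control and the separation axiom one cannot rule out the asymmetric pathology in which the candidate limit point $x$ merely lies in the $\tau(d)$-closure of $A$ without coinciding with any member of $A$.
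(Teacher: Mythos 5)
Your strategy is the same as the paper's (extract a $d^s$-convergent subsequence by compactness, then chain triangle inequalities), but the step you defer to ``an analogous chaining'' is exactly the step that fails. The first estimate is fine: $d(a,x)\le d(a,x_{n_k})+d(x_{n_k},x)\to 0$. For the companion $d(x,a)=0$, the only available chain is $d(x,a)\le d(x,x_{n_k})+d(x_{n_k},a)$; the second term vanishes by the $d^s$-convergence of the subsequence to $a$, but the first term $d(x,x_{n_k})$ is controlled by nothing --- your convergence hypothesis gives only $d(x_{n_k},x)\to 0$, and $d$ is asymmetric. The two-sided control supplied by $d^s$-compactness links the subsequence to $a$, not to $x$, so it cannot furnish the missing bound. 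You correctly single out this asymmetry as the delicate point in your closing paragraph, but then assert that it is resolved by precisely the hypothesis that does not resolve it.

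The gap is not patchable, because the statement (reading ``closed in $(X,d)$'' as $\tau(d)$-closed, which is what $C(X)$ denotes) is false. Take $X=\{p,q\}$ with $d(p,p)=d(q,q)=d(p,q)=0$ and $d(q,p)=1$: this is a $T_0$-quasi-metric space, $\{q\}$ is compact in $(X,d^s)$, yet $p$ lies in the $\tau(d)$-closure of $\{q\}$ because $d(p,q)=0$, so $\{q\}\notin C(X)$. (The companion singleton $\{p\}$ likewise defeats the variant you are actually proving, where closedness is taken with respect to the paper's notion of $d$-convergence $d(x_n,x)\to 0$: the constant sequence $p,p,\dots$ then $d$-converges to $q\notin\{p\}$.) For what it is worth, the paper's own proof conceals the same jump: it derives only $d(x,z)=0$ and then declares $x=z$, invoking the $T_0$-condition with only one of the two vanishing distances it requires. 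The ``asymmetric pathology'' you describe --- a limit point lying in the $\tau(d)$-closure of $A$ without equalling any member of $A$ --- is not excluded by the hypotheses; it genuinely occurs.
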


\begin{proof}
	Let $\{x_n\}$ be a sequence in $A$ such that $d(x, x_n ) \to  0$ for
	some $x \in X$. Since $A$ is a compact subset of $(X, d^s )$, there exist a subsequence $\{x_{n_k} \}$ of $\{x_n\}$ and a point $z \in A$ such that $d^s(z, x_{n_k} ) \to  0$. Thus we have $d(x_{n_k},z ) \to  0$. Using the triangle inequality, we have
	\[  d(x, z) \leq d(x,x_{n_k} )+ d(x_{n_k},z ).  \]
	Letting $k \to \infty$ in above inequality, we get $x = z$ and $x \in A$. Thus $A$ is a
	closed subset of $(X, d)$.
\end{proof}

One of the generalizations of contractions on metric spaces is the concept of weakly contractive
maps which appeared in \cite[Definition 1.1]{alber}. There the authors defined such maps for single valued
maps on Hilbert spaces and proved the existence of fixed points. Rhoades\cite{rhoades}
showed that most results of \cite{alber} still hold in any Banach space. Naturally, this concept can be extended to multi-valued maps on quasi-metrics. In \cite{rico}, we introduced the family of the so-called $(c)$-comparison functions. Here, we give a modified version of these by defining a more refined class of functions, namely that of the $(c)^*$-comparison functions, the purpose being to define weakly contractive multi-valued maps and give a
startpoint theorem for weakly contractive multi-valued maps. 
\begin{definition}
	A function $\gamma : [0,\infty) \to [0,\infty)$ is called a $(c)^*$-comparison function if 
	\begin{enumerate}
		\item[($\gamma_1$)] $\gamma$ is nondecreasing with $\gamma(0)=0$ and $0 < \gamma(t) < t$ for each $t > 0$;
		\item[($\gamma_2$)] for any sequence $\{t_n \}$ of $(0, \infty)$, $\overset{\infty}{\underset{n=1}{\sum}}\gamma(t_n) < \infty$ implies $\overset{\infty}{\underset{n=1}{\sum}}t_n < \infty$. 

	\end{enumerate}
	
\end{definition}

\begin{definition}
	Let $(X,d)$ be $T_0$-quasi-metric space.
	
	\begin{itemize}
	
	\item[1.] A set-valued map $F : X \to 2^X$ is called weakly contractive if, for each $x\in X$ and a given $(c)^*$-comparison function $\gamma$, there exists $y\in Fx$

	\begin{equation}\label{wcm}
		H (y, F y)\footnote{This is a short form for $H (\{y\}, F y)$ } \leq d(x, y) -\gamma(d
		(x, y)).
	\end{equation}

\item[2.] A single valued map $f : X \to X$ is called weakly contractive if, for each $x, y in X$ 
\begin{equation}\label{wcs}
d (f x, f y) \leq d(x, y) -\gamma(d(x, y)).
\end{equation}

\end{itemize}
\end{definition}

Now, we state our result.

\vspace*{0.2cm}

\begin{theorem}\label{theorem1}
Let $(X,d)$ be a left $K$-complete quasi-pseudometric space, $F:X\to CB(X)$ be a weakly contractive set-valued map, then $F$ has a  startpoint in $X$.
\end{theorem}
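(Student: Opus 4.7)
The approach is a Picard-type iteration tailored to the asymmetry of the quasi-pseudometric. Fix any $x_0 \in X$ and inductively use weak contractivity to choose $x_{n+1} \in Fx_n$ satisfying
\[
H(\{x_{n+1}\}, Fx_{n+1}) \leq d(x_n, x_{n+1}) - \gamma(d(x_n, x_{n+1})).
\]
Set $t_n := d(x_n, x_{n+1})$. If $t_n = 0$ for some $n$, then $\gamma(0) = 0$ forces $H(\{x_{n+1}\}, Fx_{n+1}) \leq 0$, so $x_{n+1}$ is already a startpoint and the proof is over; otherwise $t_n > 0$ for every $n$. Because $x_{n+2} \in Fx_{n+1}$ and $H(\{a\}, B) = \sup_{b \in B} d(a,b)$ for a singleton $\{a\}$, I obtain $t_{n+1} \leq H(\{x_{n+1}\}, Fx_{n+1}) \leq t_n - \gamma(t_n)$, so $\{t_n\}$ is strictly decreasing.

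Telescoping the inequality $\gamma(t_n) \leq t_n - t_{n+1}$ yields $\sum_n \gamma(t_n) \leq t_0 < \infty$, and property $(\gamma_2)$ of the $(c)^*$-comparison function upgrades this to $\sum_n t_n < \infty$. For $k \leq n$ the triangle inequality gives
\[
d(x_k, x_n) \leq \sum_{i=k}^{n-1} t_i,
\]
which tends to $0$ as $k \to \infty$, so $(x_n)$ is left $K$-Cauchy. By left $K$-completeness there exists $x^* \in X$ with $d(x_n, x^*) \to 0$.

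Finally, I would apply weak contractivity at $x^*$ itself to obtain $y^* \in Fx^*$ with
\[
H(\{y^*\}, Fy^*) \leq d(x^*, y^*) - \gamma(d(x^*, y^*)),
\]
and observe that it suffices to show $d(x^*, y^*) = 0$: this forces $H(\{y^*\}, Fy^*) = 0$, exhibiting $y^*$ as a startpoint of $F$. This final step is the main obstacle of the proof, because the only convergence at hand is the one-sided $d(x_n, x^*) \to 0$, while $d(x^*, y^*)$ runs in the reverse direction. My plan here is to combine the $H$-triangle inequality and the vanishing bound $H(\{x_{n+1}\}, Fx_{n+1}) \leq t_n - \gamma(t_n) \to 0$ with the semicontinuity statement of the last preliminary lemma --- specifically that for fixed $y$ the map $x \mapsto d(x,y)$ is $\tau(d^{-1})$-upper semicontinuous, which is precisely compatible with our $\tau(d^{-1})$-convergent iterates --- and to exploit the closedness of $Fx^* \in CB(X)$ in $(X, d^s)$ to pass to the limit and conclude $d(x^*, y^*) = 0$.
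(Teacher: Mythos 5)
Your construction of the iterates, the telescoping estimate $\sum_n \gamma(t_n) \leq t_0$, the use of $(\gamma_2)$ to upgrade this to $\sum_n t_n < \infty$, and the left $K$-Cauchy/convergence argument all match the paper (your telescoping is in fact slightly cleaner than the paper's separate contradiction argument showing $t_n \to 0$). The problem is the last step, which you yourself flag as ``the main obstacle'': it is a plan rather than a proof, and the plan does not close. First, weak contractivity at $x^*$ only supplies \emph{some} $y^* \in Fx^*$ satisfying the displayed inequality; nothing in the hypothesis forces $d(x^*, y^*) = 0$ for that particular $y^*$, so the reduction ``it suffices to show $d(x^*,y^*)=0$'' aims at a statement that can simply fail. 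Second, the semicontinuity you invoke points the wrong way: if $d(x_n, x^*) \to 0$ and $x \mapsto d(x,y)$ is $\tau(d^{-1})$-upper semicontinuous, what you get is $\limsup_n d(x_n, y) \leq d(x^*, y)$, a \emph{lower} bound on $d(x^*,y)$, whereas you need an upper bound. Third, routing the $H$-triangle inequality through $Fx_{n+1}$ introduces a term $H(Fx_{n+1}, Fx^*)$ that the weak contraction hypothesis --- which controls only $H(\{y\},Fy)$ for a well-chosen $y\in Fx$, not $H(Fx,Fy)$ --- gives you no way to estimate.

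The paper avoids all of this by making $x^*$ itself the startpoint: set $f(x) := H(\{x\},Fx)$, observe that along the iterates $f(x_{n+1}) \leq t_n - \gamma(t_n) \to 0$, and argue that $f$ is lower semicontinuous for the topology in which $x_n \to x^*$, so that $0 \leq f(x^*) \leq \liminf_n f(x_n) = 0$, i.e.\ $H(\{x^*\},Fx^*)=0$. That is the idea missing from your sketch: you already possess a sequence along which the ``startpoint defect'' $H(\{x_n\},Fx_n)$ tends to $0$, so the remaining issue is purely one of transferring that to the limit point, not of producing a new point inside $Fx^*$. (If you pursue this route, note that the paper's one-line justification --- $f$ is lsc as a supremum of lsc functions --- is itself delicate, since the index set $Fx$ of the supremum varies with $x$; but the structural choice of targeting $x^*$ rather than an element of $Fx^*$ is what makes the conclusion reachable at all.)
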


\begin{proof}
	Let $x_0 \in X$, by \eqref{wcm} there exists $x_1 \in Fx_0$, 
	and
	 there exists\footnote{In fact for any $x_2 \in Fx_1$.} a $x_2 \in Fx_1$ such
	that

	\[  d(x_1 , x_2 )\leq H(x_1,Fx_1) \leq d(x_0 , x_1 )- \gamma(d(x_0 , x_1 )). \]
	
	Again by \eqref{wcm}, there exists an $x_3 \in Fx_2$ such that
	\[  d(x_2 , x_3 )\leq H(x_2,Fx_2) \leq d(x_1 , x_2 )- \gamma(d(x_1 , x_2 )\leq d(x_1,x_2)\leq H(x_1,Fx_1) .
	 \]
	
Continuing this process, we can find a sequence $\{x_n \} \subset X$ such that for $n = 0, 1, 2, \cdots$

\[  x_{n+1} \in Fx_n \]

and

\[d(x_{n+1} , x_{n+2} )\leq H(x_{n+1},Fx_{n+1
}) \leq d(x_n , x_{n+1} ) - \gamma(d(x_n , x_{n+1} ))\leq H(
x_{n},Fx_n
). \]

Thus the sequence $\{d(x_n , x_{n+1} )\}$ is nonincreasing and so $\lim\limits_{n\to \infty} d(x_n , x_{n+1} ) =
l$ for some $l \geq 0$.	We show that $l = 0$. Suppose $l > 0$. Then we have
	
	\[   d(x_n , x_{n+1} ) \leq d(x_{n-1} , x_n ) -\gamma(d(x_{n-1} , x_n )) \leq d(x_{n-1} , x_n ) - \gamma(l),\]
	
and so	
	
	\[  d(x_{n+N} , x_{n+N +1} ) \leq d(x_{n-1} , x_n ) - N \gamma(l),\]
	
	which is a contradiction for $N$ large enough. Thus we have
	
	\[ \lim\limits_{n\to \infty} d(x_n , x_{n+1} ) =0.\]

For $m \in \mathbb{N}$ with $m \geq 3$, we have

\begin{align*}
	d(x_{m-1} , x_m )& \leq
	 	d(x_{m-2} , x_{m-1} ) - \gamma (d(x_{m-2} , x_{m-1} ) ) \cdots\\
	&\leq  d(x_1 , x_2 ) - \gamma(d(x_1 , x_2 )) -\cdots · · · - \gamma (d(x_{m-2} , x_{m-1} ) ).
\end{align*}

Hence we get

\[ \sum_{k=1}^{m-2}  \gamma(d(x_k , x_{k+1} )) \leq d(x_1 , x_2 ) - 	d(x_{m-1} , x_m ).\]

Letting $m \to \infty$ in above inequality, we obtain

\[ \sum_{k=1}^{\infty}  \gamma(d(x_k , x_{k+1} )) \leq d(x_1 , x_2 )< \infty,\]

which implies that

\[ \sum_{k=1}^{\infty}  d(x_k , x_{k+1} ) < \infty \text{ by $(\gamma_2)$
 } .\]

We conclude that $\{x_n\}$ is a left $K$-Cauchy sequence. According to the left $K$-completeness of $(X,d),$ there exists $x^* \in X$ such that $x_n \overset{d}{\longrightarrow} x^*$.

Given the function $f(x):= H(x,Fx)$, observe that the sequence $(fx_n)=(H(x_{n},Fx_{n}))$ is decreasing and converges to $0$. Since $f$ is $\tau(d)$-lower semicontinuous (as supremum of $\tau(d)$-lower semicontinuous functions), we have 

\[
0\leq f(x^*) \leq \underset{n\to \infty}{\liminf} f(x_n)=0.
\]
Hence $f(x^*)=0$, i.e. $H(\{x^*\},Fx^*)=0$.

\noindent
This completes the proof.

\end{proof}

\begin{remark}
	The reader can convince him(her)self that if we replace the condition \eqref{wcm} by the dual condition 
	\begin{equation}\label{wcmprime}
	H (Fy, y)\leq d(y, x) -\gamma(d
	(y,x)),
	\end{equation}
	then the conclusion of Theorem \ref{theorem1} would be that the multi-valued function $F$ possesses an endpoint.
	Moreover for the multi-valued function $F$ to admit a fixed point, it is enough that
	 \begin{equation}\label{wcmprimeprime}
	 H^s (Fy, y)\leq \min \{ d(x,y) -\gamma(d
	 (x,y
	 )), d(y, x) -\gamma(d
	 (y,x)) \}.	 
	 \end{equation}
\end{remark}

We conclude this paper with the following illustrative example:

\begin{example}
Let $$X = \left\lbrace \frac{1}{2^n} : n = 0, 1, 2, \cdots \right\rbrace \cup \{0\}$$ and let

$$
d(x, y) =
\begin{cases}
y - x,\ \ \ \ \ \ \ \ \ \ \ 
\text{if } y\geq x,\\
2(x - y), \ \ \ \ \ \ \ 
\text{if } x > y. 
\end{cases}
$$
Then $(X, d)$ is a left $K$-complete $T_0$-quasi-metric space. Let $ \gamma(t) = \frac{t}{2}$ for all $t \geq 0$ and let $F : X \to CB(X)$ be a set-valued map
defined as

$$
Fx =
\begin{cases}
\left\lbrace \frac{1}{2^{n+1}},0 \right\rbrace  \ \ \ 
\text{if } x=\frac{1}{2^n} : n = 0, 1, 2, \cdots ,\\
\{0\}, \ \ \ \ \ \ \ 
\text{if } x=0. 
\end{cases}
$$
We now show that $F$ satisfies condition \eqref{wcm}.

\begin{itemize}
	\item[Case 1.] $x=0$, there exists $y= 0 \in F x=F0=\{0\}$ such that
	\[ 0=H (y, F y)=H(0,F0)\leq d(0, 0) -\gamma(d
	(0, 0))=0.  \]
	
 \item[Case 2.] $x=\frac{1}{2^n}$, there exists $y= 0 \in F x=\left\lbrace \frac{1}{2^n},0 \right\rbrace$ such that
 \[ 0=H (y, F y)=H(0,F0)\leq d\left(\frac{1}{2^n}, 0\right) -\gamma\left(d\left(\frac{1}{2^n}, 0\right)\right).  \]
\end{itemize}
The map $F$ satisfies the assumptions of Theorem \ref{theorem1}, so it has a startpoint, which in this case is $0$.

\end{example}

\section*{Conflict of interest.}

The authors declare that there is no conflict
of interests regarding the publication of this article.

\bibliographystyle{amsplain}

\end{document}